\journal{Journal of Mathematical Analysis and Applications}
\newtheorem{theorem}{Theorem}[section]
\newtheorem{proposition}[theorem]{Proposition}
\newtheorem{lemma}[theorem]{Lemma}
\newtheorem{remark}[theorem]{Remark}
\makeatletter \@addtoreset{equation}{section} \makeatother
\newcommand{\N}{\mathbb{N}}
\newcommand{\Z}{\mathbb{Z}}
\newcommand{\R}{\mathbb{R}}
\newcommand{\C}{\mathbb{C}}
\newcommand{\PP}{\mathbb{P}}
\newcommand{\EE}{\mathbb{E}}
\newcommand{\prob}[1]{\mathbb{P}\hspace{-0.5mm}\left(#1\right)}
\newcommand{\esp}[1]{\mathbb{E}\hspace{-0.5mm}\left[#1\right]}
\begin{document}

\begin{frontmatter}

    \title{Large deviations and continuity estimates for the derivative \\of a random model of \texorpdfstring{$\log |\zeta|$}{log zeta} on the critical line}

    \author[a1]{Louis-Pierre Arguin\fnref{fn1}}
    \author[a2]{Fr\'ed\'eric Ouimet\corref{cor2}\fnref{fn2}}

    \address[a1]{Baruch College and Graduate Center (CUNY), New York, NY 10010, USA.}
    \address[a2]{Universit\'e de Montr\'eal, Montr\'eal, QC H3T 1J4, Canada.}

    \cortext[cor2]{Corresponding author}
    \ead{ouimetfr@dms.umontreal.ca}

    \fntext[fn1]{L.-P. Arguin is supported in part by NSF Grant DMS-1513441 and by NSF CAREER DMS-1653602.}
    \fntext[fn2]{F. Ouimet is supported by a NSERC Doctoral Program Alexander Graham Bell scholarship (CGS D3).}

    \begin{abstract}
        In this paper, we study the random field
        \begin{equation*}
            X(h) \circeq \sum_{p \leq T} \frac{\text{Re}(U_p \, p^{-i h})}{p^{1/2}}, \quad h\in [0,1],
        \end{equation*}
        where $(U_p, \, p ~\text{primes})$ is an i.i.d.\hspace{-0.3mm} sequence of uniform random variables on the unit circle in $\C$.
        \cite{arXiv:1304.0677} showed that $(X(h), \, h\in (0,1))$ is a good model for the large values of $(\log |\zeta(\frac{1}{2} + i (T + h))|, \, h\in [0,1])$ when $T$ is large, if we assume the Riemann hypothesis.
        The asymptotics of the maximum were found in \cite{MR3619786} up to the second order, but the tightness of the recentered maximum is still an open problem.
        As a first step, we provide large deviation estimates and continuity estimates for the field's derivative $X'(h)$.
        The main result shows that, with probability arbitrarily close to $1$,
        \begin{equation*}
            \max_{h\in [0,1]} X(h) - \max_{h\in \mathcal{S}} X(h) = O(1),
        \end{equation*}
        where $\mathcal{S}$ a discrete set containing $O(\log T \sqrt{\log \log T})$ points.
    \end{abstract}

    \begin{keyword}
        extreme value theory \sep large deviations \sep Riemann zeta function \sep estimates
        \MSC[2010]{11M06 \sep 60F10 \sep 60G60 \sep 60G70}
    \end{keyword}

\end{frontmatter}

\section{Introduction}\label{sec:intro}

    In \cite{FyodorovHiaryKeating2012} and \cite{MR3151088}, it was conjectured that if $\tau$ is sampled uniformly in $[T,2T]$ for some large $T$, then the law of the maximum of $(\log |\zeta(\frac{1}{2} + i(\tau + h))|, h\in [0,1])$, where $\zeta$ denotes the Riemann zeta function, should be asymptotic to $\log \log T - \frac{3}{4} \log \log \log T + \mathcal{M}_T$ where $(\mathcal{M}_T, T\geq 2)$ is a sequence of random variables converging in distribution.
    At present, the first order of the maximum is proved conditionally on the Riemann hypothesis in \cite{MR3851835} and unconditionally in \cite{arXiv:1612.08575}.

    In order to study this hard problem originally, a randomized version of the Riemann zeta function was introduced in \cite{arXiv:1304.0677}, see \eqref{def:X}.
    The first order of the maximum was proved in \cite{arXiv:1304.0677}, the second order of the maximum was proved in \cite{MR3619786}, and a related study of the Gibbs measure can be found in \cite{arXiv:1706.08462} and \cite{MR3841407}.
    The tightness of the recentered maximum is still open.

    As a first step, our main result (Theorem \ref{thm:prop:large.deviation.estimates.derivative}) shows that the tightness of the ``continuous'' maximum $\max_{h\in [0,1]} X(h)$ (once recentered) can be reduced to the tightness of a ``discrete'' maximum $\max_{h\in \mathcal{S}} X(h)$ (once recentered) where $\mathcal{S}$ is a discrete set containing $O(\log T \sqrt{\log\log T})$ points.
    In order to prove Theorem \ref{thm:prop:large.deviation.estimates.derivative}, we will need continuity estimates and large deviation estimates for the field's derivative $X'(h)$, which can be found in Proposition \ref{prop:continuity.estimates.derivative} and Proposition \ref{prop:large.deviation.estimates.derivative}, respectively.

    The paper is organised as follows.
    In Section \ref{sec:model}, we introduce the model $X(h)$.
    In Section \ref{sec:main.result}, the main result is stated and proven.
    Proposition \ref{prop:continuity.estimates.derivative} and Proposition \ref{prop:large.deviation.estimates.derivative} are stated in Section \ref{sec:main.result} and proven in Section \ref{sec:proof.propositions}.

\section{The model}\label{sec:model}

    Let $(U_p, \, p ~\text{primes})$ be an i.i.d.\hspace{-0.3mm} sequence of uniform random variables on the unit circle in $\C$.
    The random field of interest is
    \begin{equation}\label{def:X}
        X(h) \circeq \sum_{p \leq T} W_p(h) \circeq \sum_{p \leq T} \frac{\text{Re}(U_p \, p^{-i h})}{p^{1/2}}, \quad h\in [0,1].
    \end{equation}
    (A sum over the variable $p$ always denotes a sum over primes.)
    This is a good model for the large values of $(\log |\zeta(\frac{1}{2} + i(\tau + h))|, h\in [0,1])$ for the following reason.
    Proposition 1 in \cite{arXiv:1304.0677} proves that, assuming the Riemann hypothesis, and for $T$ large enough, there exists a set $B\subseteq [T,T+1]$, of Lebesgue measure at least $0.99$, such that
    \begin{equation}
        \log |\zeta(\frac{1}{2} + i t)| = \text{Re}\left(\sum_{p \leq T} \frac{1}{p^{1/2 + it}} \frac{\log(T / p)}{\log T}\right) + O(1), \quad t\in B.
    \end{equation}
    If we ignore the smoothing term $\log(T / p) / \log T$ and note that the process $(p^{-i\tau}\hspace{-1mm}, p ~\text{primes})$, where $\tau$ is sampled uniformly in $[T,2T]$, converges, as $T\to\infty$ (in the sense of convergence of its finite-dimensional distributions), to a sequence of independent random variables distributed uniformly on the unit circle (by computing the moments), then the model \eqref{def:X} follows.
    For more information, see Section 1.1 in \cite{MR3619786}.

    More generally, for $-1 \leq r \leq k$, denote the increments of the field by
    \begin{equation}\label{def:X.r.k}
        X_{r,k}(h) \circeq \sum_{2^r < \log p \leq 2^k} \frac{\text{Re}(U_p \, p^{-i h})}{p^{1/2}}, \quad h\in [0,1].
    \end{equation}
    Differentiation of \eqref{def:X.r.k} yields
    \begin{equation}\label{def:X.prime.r.k}
        X_{r,k}'(h) = \sum_{2^r < \log p \leq 2^k} W_p'(h) = \sum_{2^r < \log p \leq 2^k} \frac{\text{Im}(U_p \, p^{-i h}) \log p}{p^{1/2}}.
    \end{equation}

\section{Main result}\label{sec:main.result}

    Throughout the paper, we will write $c$, $\widetilde{c}$, $c'$, and $c''$, for generic positive constants whose value may change at different occurrences.
    Here are the main side results of this paper.

    \begin{proposition}[Continuity estimates]\label{prop:continuity.estimates.derivative}
        Let $C > 0$.
        For any $-1 \leq r \leq k$, $0 \leq x \leq C (2^{2k} - 2^{2r})$, $2 \leq a \leq 2^{6k} - x$ and $h\in \R$,
        \begin{equation}\label{eq:prop:continuity.estimates.derivative}
            \prob{\max_{h':|h' - h| \leq 2^{-3k-1}} X_{r,k}'(h') \geq x + a, X_{r,k}'(h) \leq  x} \leq  c \exp\left(-2\frac{x^2}{2^{2k} - 2^{2r}} - \widetilde{c} \, a^{3/2}\right),
        \end{equation}
        where the constants $c$ and $\widetilde{c}$ only depend on $C$.
    \end{proposition}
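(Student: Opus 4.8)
The plan is to reduce the continuous maximum to Chernoff bounds for $X'_{r,k}$ and its increments, using that the interval $[h-2^{-3k-1},h+2^{-3k-1}]$ is far shorter than the correlation length $\asymp2^{-k}$, and that the uniform phases $U_p$ make every derivative of $X_{r,k}$ \emph{exactly} sub-Gaussian: from $\tfrac1{2\pi}\int_0^{2\pi}e^{z\sin\varphi}\,d\varphi=I_0(z)\le e^{z^2/4}$ (termwise, as $\tfrac1{(m!)^2}\le\tfrac1{m!}$) one gets $\esp{e^{\theta X^{(n)}_{r,k}(h)}}\le e^{\theta^2\mathrm{Var}(X^{(n)}_{r,k}(h))/2}$ for every $\theta\in\R$. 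By stationarity of $(U_p p^{-ih})_p$ take $h=0$; write $\delta=2^{-3k-1}$, $M=\max_{|h'|\le\delta}X'_{r,k}(h')$, $V=\sup_{|h'|\le\delta}|X''_{r,k}(h')|$, $\sigma^2=\mathrm{Var}(X'_{r,k}(0))$. A Mertens-type estimate in which the lower-order terms cancel, $\sum_{2^r<\log p\le2^k}(\log p)^j/p=\tfrac1j(2^{jk}-2^{jr})+O(1)$ for fixed $j$, gives $\sigma^2=\tfrac14(2^{2k}-2^{2r})+O(1)$ and $\mathrm{Var}(X''_{r,k}(0))=\tfrac18(2^{4k}-2^{4r})+O(1)$; since $2^{2k}-2^{2r}\ge\tfrac34\,2^{2k}$ when $r<k$, it follows that $\tfrac{x^2}{2\sigma^2}\ge\tfrac{2x^2}{2^{2k}-2^{2r}}-O(1)$ on the range $x\le C(2^{2k}-2^{2r})$. (When $r=k$ the field vanishes, and for bounded $k$ the right-hand side is bounded below — both trivial.)

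Integrating $X'_{r,k}$ gives $M\le\max_{h_j\in\mathcal N}X'_{r,k}(h_j)+\eta\,V$ for a grid $\mathcal N\subset[-\delta,\delta]$ of mesh $\eta$; I take $\eta=2^{-4k}$, so $|\mathcal N|=O(2^k)$. Telescoping $V\le\sum_{j\ge0}\delta^j\,|X^{(2+j)}_{r,k}(0)|$ (the remainder vanishes as the order grows, since $\delta\,2^k\ll1$), splitting this sum with geometric weights, and applying the sub-Gaussian bound to each $X^{(2+j)}_{r,k}(0)$ (whose variance is $\asymp2^{2(2+j)k}$), the $j=0$ term dominates and yields $\prob{V\ge t}\le c\exp(-\widetilde{c}\,t^2\,2^{-4k})$. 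Hence $\prob{\eta V\ge a/2}=\prob{V\ge a\,2^{4k-1}}\le c\exp(-\widetilde{c}\,a^2\,2^{4k})\le c\exp\!\big(-\tfrac{2x^2}{2^{2k}-2^{2r}}-\widetilde{c}\,a^{3/2}\big)$, because $a\ge2$ forces $a^2\,2^{4k}$ to dominate both $a^{3/2}$ and $2^{2k}\gtrsim\tfrac{2x^2}{2^{2k}-2^{2r}}$.

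On $\{\eta V<a/2\}$, the event under consideration forces some $h_j\in\mathcal N$ with $X'_{r,k}(h_j)\ge x+a/2$, hence also $X'_{r,k}(h_j)-X'_{r,k}(0)\ge a/2$. For each $h_j$ I bound the probability of this pair by $\inf_{\lambda,\mu\ge0}e^{-\lambda(x+a/2)-\mu a/2}\esp{e^{\lambda X'_{r,k}(h_j)+\mu(X'_{r,k}(h_j)-X'_{r,k}(0))}}$. The moment generating function factorizes over primes; writing each factor as $\esp{e^{z(A_p\sin\theta_p+B_p\cos\theta_p)}}=I_0(z\sqrt{A_p^2+B_p^2})\le e^{z^2(A_p^2+B_p^2)/4}$ with $z=(\log p)p^{-1/2}$ and $A_p^2+B_p^2=\lambda^2+2\mu(\lambda+\mu)\big(1-\cos(h_j\log p)\big)\le\lambda^2+\mu(\lambda+\mu)h_j^2(\log p)^2$, one obtains $\esp{e^{\lambda X'_{r,k}(h_j)+\mu(X'_{r,k}(h_j)-X'_{r,k}(0))}}\le\exp\!\big(\tfrac12\lambda^2\sigma^2+\tfrac12\mu(\lambda+\mu)\tau^2\big)$ with $\tau^2:=h_j^2\,\mathrm{Var}(X''_{r,k}(0))\le\delta^2\,\mathrm{Var}(X''_{r,k}(0))\asymp2^{-2k}$. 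Minimizing this positive-definite quadratic in $(\lambda,\mu)$ — whose optimizers are nonnegative (here $a\ge2$ is used) — gives a bound $\le\exp\!\big(-\tfrac{x^2}{2\sigma^2}-\widetilde{c}\,\tfrac{a^2}{\tau^2}\big)$; since $\tfrac{a^2}{\tau^2}\gtrsim a^2\,2^{2k}$ dominates $\widetilde{c}\,a^{3/2}+2^{2k}$ (using $a\ge2$) and $\tfrac{x^2}{2\sigma^2}\ge\tfrac{2x^2}{2^{2k}-2^{2r}}-O(1)$, this is $\le c\exp\!\big(-\tfrac{2x^2}{2^{2k}-2^{2r}}-\widetilde{c}\,a^{3/2}\big)$ with a summand $\gtrsim2^{2k}$ left over in the exponent to absorb the union bound over the $O(2^k)$ points of $\mathcal N$.

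I expect the two-parameter Chernoff step to be the only genuinely delicate part: one must carry the dependence between the value $X'_{r,k}(h_j)$ and the increment through a single sum over primes, and control the cross term created by the small but nonzero phase shift $h_j\log p$. It is precisely the smallness $\delta=2^{-3k-1}$ of the interval that makes $\tau^2$ of order $2^{-2k}$ — negligible next to $\sigma^2\asymp2^{2k}$ — so that the increment contributes the strong factor $\exp(-\Theta(a^2\,2^{2k}))$ while the value still contributes the sharp Gaussian factor $\exp(-x^2/2\sigma^2)$ with the right constant; the exponent $a^{3/2}$ stated in the proposition is then a convenient weakening of $a^2\,2^{2k}$. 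A routine side point is the cancellation of lower-order terms in the Mertens estimates for $\sum(\log p)^j/p$, which is what keeps the constant $2$ in $\tfrac{2x^2}{2^{2k}-2^{2r}}$ correct.
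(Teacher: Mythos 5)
Your proof is correct in substance, but it follows a genuinely different route from the paper. The paper first reduces to $x\in\N_0$, $a\ge 1$, partitions on the value of $X_{r,k}'(h)$ (the events $B_q$), and then runs a multiscale chaining argument over the nested grids $\mathcal{H}_{k+i}$, bounding each pair (value at $h$, dyadic increment) by a joint Chernoff estimate (Lemma \ref{lem:multivariate.derivative.chernoff}); there the exponent $y^{3/2}/(|h_2-h_1|2^{3k})$ comes from a deliberately suboptimal choice of the second tilt $\lambda_2$, and the restrictions $\lambda_1\le 4C$, $\lambda_2\le|h_2-h_1|^{-1}$ plus a separate treatment of small primes are forced by the local expansion of $\log I_0$. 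You instead use a single grid of mesh $2^{-4k}$ ($O(2^k)$ points), control the discretization error through a sub-Gaussian tail for $\sup_{|h'|\le\delta}|X_{r,k}''(h')|$ via the Taylor expansion at the centre, and then apply a two-parameter Chernoff bound at each grid point for the pair (value at $h_j$, increment $X_{r,k}'(h_j)-X_{r,k}'(h)$); your trigonometric identity $A_p^2+B_p^2=\lambda^2+2\mu(\lambda+\mu)(1-\cos(h_j\log p))$ agrees with \eqref{eq:lem:multivariate.derivative.chernoff.beg.I.0}, and full optimization gives an increment exponent of order $a^2 2^{2k}$, of which the stated $a^{3/2}$ is indeed a weakening; this also lets you dispense with the $B_q$-decomposition, since the value is pinned at the far point $h_j$ rather than at $h$. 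Two genuine advantages of your route: the global inequality $I_0(z)\le e^{z^2/4}$ (valid for all real $z$) removes all range restrictions on the tilts and the small-prime corrections, and the final bound is quantitatively stronger in $a$. What the paper's route buys is robustness (the chaining does not use analyticity or the bounded frequency content $\log p\le 2^k$ that your Bernstein/Taylor step exploits, and it mirrors the argument for $X$ itself in the cited earlier work) and economy (Lemma \ref{lem:multivariate.derivative.chernoff} is exactly what is reused for Proposition \ref{prop:large.deviation.estimates.derivative}). Two small points you should tighten when writing this up: the variance bounds for the higher derivatives $X_{r,k}^{(2+j)}$ must be uniform in $j$ (use $(\log p)^{2(2+j)}\le 2^{2jk}(\log p)^4$ rather than a Mertens-type estimate whose $O(1)$ depends on $j$), and the absorption of the union bounds (over the $O(2^k)$ grid points and over $j$ in the tail of $\sup|X''|$) together with the $O_C(1)$ errors needs $k\ge k_0(C)$, with all smaller $k$ handled trivially by enlarging $c$ --- you do flag this, and it is routine.
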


    \begin{proposition}[Large deviation estimates]\label{prop:large.deviation.estimates.derivative}
        Let $C > 0$.
        For any $-1 \leq r \leq k$, $0 \leq x \leq C (2^{2k} - 2^{2r})$ and $h\in \R$,
        \begin{equation}\label{eq:prop:large.deviation.estimates.derivative}
            \prob{\max_{h':|h' - h| \leq 2^{-3k-1}} X_{r,k}'(h') \geq x} \leq c \exp\left(-2\frac{x^2}{2^{2k} - 2^{2r}}\right),
        \end{equation}
        where the constant $c$ only depends on $C$.
    \end{proposition}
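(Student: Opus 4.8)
The plan is to deduce Proposition~\ref{prop:large.deviation.estimates.derivative} from Proposition~\ref{prop:continuity.estimates.derivative} together with a one-point (Chernoff) tail bound for $X_{r,k}'(h)$. Abbreviate $M\circeq\max_{h':|h'-h|\le 2^{-3k-1}}X_{r,k}'(h')$, $D\circeq 2^{2k}-2^{2r}$ and $S\circeq\sum_{2^r<\log p\le 2^k}(\log p)^2/p$. We may assume the sum defining $X_{r,k}'(h)$ is non-empty (otherwise $X_{r,k}'\equiv 0$ and there is nothing to prove), so that $D>0$ and $S>0$. Since the centre point $h$ lies in the range of the maximum, $\{X_{r,k}'(h)\ge y\}\subseteq\{M\ge y\}$, and I would split
\begin{equation*}
\{M\ge x\}\subseteq\{X_{r,k}'(h)\ge x-2\}\;\cup\;\{M\ge x,\ X_{r,k}'(h)\le x-2\}.
\end{equation*}
The second event is \emph{precisely} of the form estimated in Proposition~\ref{prop:continuity.estimates.derivative}, with $x-2$ and $2$ playing the roles of $x$ and $a$; a single application at the minimal admissible value $a=2$ already captures it no matter how large the overshoot $M-X_{r,k}'(h)$ is, so there is no need to slice that overshoot dyadically. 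Provided $x\ge 2$ and $k\ge\tfrac14\log_2 C$, the hypotheses hold ($0\le x-2\le x\le CD$, and $2\le 2^{6k}-(x-2)$ since $x\le CD\le C\,2^{2k}\le 2^{6k}$), and Proposition~\ref{prop:continuity.estimates.derivative} yields $\prob{M\ge x,\ X_{r,k}'(h)\le x-2}\le c\exp(-2(x-2)^2/D-\widetilde c\,2^{3/2})$; dropping the last term and using $(x-2)^2\ge x^2-4x$ and $x/D\le C$, this is at most $c\,e^{8C}\exp(-2x^2/D)$.

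For the first event I would prove the one-point estimate $\prob{X_{r,k}'(h)\ge y}\le c\exp(-2y^2/D)$ for $0\le y\le CD$, the genuine base case. The summands $W_p'(h)=p^{-1/2}\log p\cdot\mathrm{Im}(U_p\,p^{-ih})$ are independent, and $U_p\,p^{-ih}$ being uniform on the unit circle, $\mathrm{Im}(U_p\,p^{-ih})$ has the law of $\sin$ of a uniform angle, whence for $\lambda\ge 0$
\begin{equation*}
\EE[e^{\lambda W_p'(h)}]=\tfrac1{2\pi}\!\int_0^{2\pi}\!e^{\lambda p^{-1/2}\log p\,\sin\theta}\,d\theta=\sum_{m\ge 0}\frac{(\lambda^2(\log p)^2/(4p))^m}{(m!)^2}\le e^{\lambda^2(\log p)^2/(4p)}.
\end{equation*}
Hence $\EE[e^{\lambda X_{r,k}'(h)}]\le e^{\lambda^2 S/4}$, and optimising the exponential Markov inequality in $\lambda$ gives $\prob{X_{r,k}'(h)\ge y}\le e^{-y^2/S}$. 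By Mertens' theorem and partial summation — invoking the prime number theorem to bound the error term — one has $S=\tfrac12 D+O(1)$ uniformly in $r,k$, so $S\le\tfrac12 D+c'$ with $c'$ absolute, so $1/S\ge\tfrac2D-\tfrac{4c'}{D^2}$; since $y/D\le C$ this gives $e^{-y^2/S}\le e^{-2y^2/D+4c'C^2}$, proving the one-point bound. Applying it with $y=x-2$ (again using $(x-2)^2\ge x^2-4x$ and $x/D\le C$), the first event contributes $\le c\,e^{8C}\exp(-2x^2/D)$, and summing the two contributions proves the claim when $x\ge 2$ and $k\ge\tfrac14\log_2 C$.

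The leftover ranges — $x<2$, and the values of $k$ with $-1\le k<\tfrac14\log_2 C$ (a bounded range, possibly empty) — are trivial: in either case $x^2/D\le Cx$ is bounded in terms of $C$ (immediate from $x\le CD$ when $x<2$; while $k<\tfrac14\log_2 C$ forces $D\le 2^{2k}<C^{1/2}$, hence $x<C^{3/2}$), so $\exp(-2x^2/D)$ is bounded below by a positive constant depending only on $C$ and the bound $\prob{M\ge x}\le 1$ suffices after enlarging $c$. The only genuinely hard input is Proposition~\ref{prop:continuity.estimates.derivative}, which we assume; within the deduction itself the sole point needing care is the \emph{sharp} constant $2$ in the exponent, which is why one needs the precise asymptotics $S=\tfrac12 D+O(1)$ (a true number-theoretic fact, not merely $S\asymp D$) and why the hypothesis $x\le C(2^{2k}-2^{2r})$ is indispensable: it is exactly what lets every lower-order discrepancy — the $O(1)$ in $S$, the shift by $2$, the side conditions of Proposition~\ref{prop:continuity.estimates.derivative} — be absorbed into a constant depending on $C$ alone.
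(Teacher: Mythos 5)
Your proposal is correct and follows essentially the same route as the paper: the identical decomposition $\{M\ge x\}\subseteq\{X_{r,k}'(h)\ge x-2\}\cup\{M\ge x,\ X_{r,k}'(h)\le x-2\}$, with Proposition \ref{prop:continuity.estimates.derivative} applied at $a=2$ and a one-point Chernoff bound that is exactly the paper's Lemma \ref{lem:derivative.upper.bound} (the paper proves it via the local expansion of $\log I_0$, whereas you use the slightly cleaner global inequality $I_0(u)\le e^{u^2/4}$ together with the same prime-sum asymptotics of Lemma \ref{lem:technical.lemma}). Your explicit handling of the boundary ranges ($x<2$, small $k$ so that $a=2\le 2^{6k}-(x-2)$ holds) is careful bookkeeping the paper leaves implicit, not a difference of method.
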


    From the last proposition, we obtain the following theorem.

    \begin{theorem}[Main result]\label{thm:prop:large.deviation.estimates.derivative}
        Let $-1 \leq r \leq k$.
        For all $L > 0$, let $\mathcal{S}_{r,k,L}$ be a set of equidistant points in $[0,1]$ such that $|\mathcal{S}_{r,k,L}| = \lceil L \sqrt{2^{2k} - 2^{2r}} \sqrt{k \log 2}\rceil$ and $|h' - h| \geq |\mathcal{S}_{r,k,L}|^{-1}$ for different $h,h'\in \mathcal{S}_{r,k,L}$.
        Then, for any $K > 0$, there exists $L \circeq L(K) > 0$ large enough that
        \begin{equation}\label{eq:thm:prop:large.deviation.estimates.derivative}
            \PP\left(\Big|\max_{h\in[0,1]} X_{r,k}(h) - \max_{h\in \mathcal{S}_{r,k,L}} X_{r,k}(h)\Big| > K\right) < e^{-\frac{k}{4} (1 - e^{-K})^2 L^2}.
        \end{equation}
    \end{theorem}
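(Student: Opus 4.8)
The plan is to control $\max_{[0,1]}X_{r,k}-\max_{\mathcal{S}_{r,k,L}}X_{r,k}$, which is automatically $\ge 0$ since $\mathcal{S}_{r,k,L}\subseteq[0,1]$, by $|\mathcal{S}_{r,k,L}|^{-1}\sup_{[0,1]}|X_{r,k}'|$, and then to bound $\sup_{[0,1]}|X_{r,k}'|$ via Proposition \ref{prop:large.deviation.estimates.derivative} after covering $[0,1]$ by $O(2^{3k})$ short intervals. One discards the degenerate cases $r=k$ (then $X_{r,k}\equiv 0$) and $k\le 0$ (then $|\mathcal{S}_{r,k,L}|\le 1$ and the claim is immediate or vacuous), so assume $-1\le r<k$ and $k\ge 1$, whence $2^{2k}-2^{2r}\ge\tfrac34\,2^{2k}\ge 3$; write $N\circeq|\mathcal{S}_{r,k,L}|$.

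Since $X_{r,k}\in C^1$ and the $N$ equidistant points of $\mathcal{S}_{r,k,L}$ partition $[0,1]$ into subintervals of length at most $N^{-1}$, every $h\in[0,1]$ has some $h_0\in\mathcal{S}_{r,k,L}$ with $|h-h_0|\le N^{-1}$, and $X_{r,k}(h)-X_{r,k}(h_0)=\int_{h_0}^{h}X_{r,k}'(s)\,ds\le N^{-1}\sup_{[0,1]}|X_{r,k}'|$. Maximising over $h$ gives
\begin{equation*}
    0\ \le\ \max_{h\in[0,1]}X_{r,k}(h)-\max_{h\in\mathcal{S}_{r,k,L}}X_{r,k}(h)\ \le\ \frac{1}{N}\,\sup_{s\in[0,1]}|X_{r,k}'(s)|,
\end{equation*}
so it suffices to prove $\PP\big(\sup_{[0,1]}|X_{r,k}'|>NK\big)\le e^{-\frac{k}{4}(1-e^{-K})^2L^2}$ for $L$ large. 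Next I would cover $[0,1]$ by $M\circeq\lceil 2^{3k}\rceil+1$ closed intervals $[h_i-2^{-3k-1},h_i+2^{-3k-1}]$ with centres $h_i\in[0,1]$ that are consecutively $2^{-3k}$ apart; then $\{\sup_{[0,1]}|X_{r,k}'|>NK\}\subseteq\bigcup_{i=1}^{M}\bigcup_{\varepsilon=\pm1}\{\max_{|s-h_i|\le 2^{-3k-1}}(\varepsilon\,X_{r,k}'(s))>NK\}$. Using that $-X_{r,k}'\overset{d}{=}X_{r,k}'$ as processes (replace each $U_p$ by $-U_p$, again uniform on the unit circle), a union bound over these $2M$ events together with Proposition \ref{prop:large.deviation.estimates.derivative} applied with $x=NK$ yields, provided $NK\le C(2^{2k}-2^{2r})$,
\begin{equation*}
    \PP\Big(\sup_{s\in[0,1]}|X_{r,k}'(s)|>NK\Big)\ \le\ 2Mc\,\exp\!\Big(-\frac{2N^2K^2}{2^{2k}-2^{2r}}\Big)\ \le\ 4c\,2^{3k}\exp\big(-2L^2K^2k\log 2\big),
\end{equation*}
where the last inequality uses $N\ge L\sqrt{2^{2k}-2^{2r}}\sqrt{k\log 2}$ and $2M=2^{3k+1}+2\le 4\cdot 2^{3k}$.

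To close the argument I would choose the constants as follows. From $N\le L\sqrt{2^{2k}-2^{2r}}\sqrt{k\log 2}+1$ and $2^{2k}-2^{2r}\ge\tfrac34\,2^{2k}$ one checks that $\sup_{r<k,\,k\ge1}\frac{NK}{2^{2k}-2^{2r}}$ is finite and at most a constant linear in $L$, so $C=C(K,L)$ can be taken large enough that the constraint $NK\le C(2^{2k}-2^{2r})$ holds for all admissible $r,k$; then $c$ depends only on $C$, hence only on $(K,L)$. Writing $2L^2K^2k\log 2=\tfrac14(1-e^{-K})^2L^2k+\beta(K)L^2k$ with $\beta(K)\circeq 2K^2\log 2-\tfrac14(1-e^{-K})^2>0$ (valid since $(1-e^{-K})^2\le K^2$ and $2\log 2>\tfrac14$), the bound above reads $e^{-\frac{k}{4}(1-e^{-K})^2L^2}\cdot 4c\,e^{3k\log 2-\beta(K)L^2k}$, and as soon as $L=L(K)$ is large enough that $\beta(K)L^2\ge 3\log 2+\log^+(4c)$ the second factor is $\le 1$ for every $k\ge 1$, which gives \eqref{eq:thm:prop:large.deviation.estimates.derivative}.

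The substantive input is Proposition \ref{prop:large.deviation.estimates.derivative} (itself built on Proposition \ref{prop:continuity.estimates.derivative}); within the present argument the one point needing care is that the polynomial factor $2^{3k}$ coming from the union bound over $O(2^{3k})$ intervals must be eaten by the Gaussian factor, and this is exactly what dictates the scale $N^{-1}\asymp(L\,2^{k}\sqrt{k})^{-1}$ of the grid in the statement — a coarser $\mathcal{S}_{r,k,L}$ would not work. A minor wrinkle is that $C$, and therefore $c$, depends on $L$; this is harmless because $c$ grows at most polynomially in $C$ (as one reads off from the proof of Proposition \ref{prop:large.deviation.estimates.derivative}), so $\beta(K)L^2$ still beats $\log^+(4c)$ for $L$ large; alternatively, the finitely many $k$ violating $NK\le C(2^{2k}-2^{2r})$ for a fixed $C$ can be dealt with by hand using the deterministic estimate $\sup_{[0,1]}|X_{r,k}'|\le\sum_{2^r<\log p\le 2^k}(\log p)\,p^{-1/2}=O_{r,k}(1)=o(N)$ as $L\to\infty$.
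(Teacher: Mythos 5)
Your argument is essentially the paper's: both proofs reduce the theorem to a tail bound for $\sup_{h\in[0,1]}|X_{r,k}'(h)|$ at scale $L\sqrt{2^{2k}-2^{2r}}\sqrt{k\log 2}$, obtained from a union bound over the $O(2^{3k})$ windows of radius $2^{-3k-1}$, the symmetry $-X_{r,k}'\overset{d}{=}X_{r,k}'$, and Proposition \ref{prop:large.deviation.estimates.derivative}. The only real difference is cosmetic: you compare $\max_{[0,1]}X_{r,k}$ with $\max_{\mathcal{S}_{r,k,L}}X_{r,k}$ by integrating $X_{r,k}'$ (threshold $NK$, with $N=|\mathcal{S}_{r,k,L}|$), while the paper applies the mean value theorem to $e^{X_{r,k}}$ (threshold $\tfrac12(1-e^{-K})L\sqrt{2^{2k}-2^{2r}}\sqrt{k\log 2}$); the two thresholds are of the same order and both yield the stated right-hand side.

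The auxiliary claims in your last paragraph, however, do not hold as stated. First, the constant $c$ of Proposition \ref{prop:large.deviation.estimates.derivative} is not polynomial in $C$: already in Lemma \ref{lem:derivative.upper.bound} the absorbed corrections produce a factor of size $e^{O(C^2)}$, and in the chaining step of Proposition \ref{prop:continuity.estimates.derivative} the sum $\sum_{q} e^{4C(q+1)-\widetilde c\, q^{3/2}}$ makes the final constant of size $e^{O(C^3)}$, so for $C\asymp LK$ the factor $\log^{+}(4c)$ can grow like $L^3$ and is not beaten by $\beta(K)L^2$. Second, the fallback does not close the gap either: the deterministic bound $\sup_{[0,1]}|X_{r,k}'|\le\sum_{\log p\le 2^k}(\log p)\,p^{-1/2}\asymp e^{2^{k-1}}$ is below $NK\asymp LK\,2^{k}\sqrt{k}$ only for $k\lesssim\log_2\log(LK)$, whereas the constraint $NK\le C(2^{2k}-2^{2r})$ with an absolute $C$ fails for all $k\lesssim\log_2(LK)$, a range that grows with $L$; in the intermediate range of $k$ neither patch applies, so ``$O_{r,k}(1)=o(N)$ as $L\to\infty$'' is true only for each fixed $k$ and does not give uniformity. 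To be fair, this wrinkle is shared silently by the paper's own proof (its application of Proposition \ref{prop:large.deviation.estimates.derivative} with $x=M\sqrt{2^{2k}-2^{2r}}\sqrt{k\log 2}$ and $M=\tfrac12(1-e^{-K})L$ likewise forces $C$ of order $L$ when $k$ is bounded), and it is harmless in the regime relevant to Remark \ref{rem:thm:prop:large.deviation.estimates.derivative}, where $2^k=\log T\to\infty$ with $L$ fixed so that an absolute $C$ suffices; but you should either restrict to $k\ge k_0(K,L)$, or track the dependence of $c$ on $C$ explicitly, rather than rely on the two justifications you give.
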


    \begin{remark}\label{rem:thm:prop:large.deviation.estimates.derivative}
        When $r = -1$ and $2^k = \log T$, $X_{r,k}(h)$ is just the original model $X(h)$.
        In that case, \eqref{eq:thm:prop:large.deviation.estimates.derivative} shows that, with probability as close to $1$ as we want, there exists a discrete set $\mathcal{S} \subseteq [0,1]$ such that
        \begin{equation}\label{eq:reduce.to.discrete.problem}
            \max_{h\in [0,1]} X(h) - \max_{h\in \mathcal{S}} X(h) = O(1),
        \end{equation}
        where $|\mathcal{S}| = O(\log T \sqrt{\log \log T})$.
    \end{remark}

    We prove Theorem \ref{thm:prop:large.deviation.estimates.derivative} right away and we will prove Proposition \ref{prop:continuity.estimates.derivative} and Proposition \ref{prop:large.deviation.estimates.derivative} in Section \ref{sec:proof.propositions}.

    \begin{proof}[Proof of Theorem \ref{thm:prop:large.deviation.estimates.derivative}]
        For $M > 0$, define the event
        \begin{equation}
            E = \left\{\max_{h\in [0,1]} |X_{r,k}'(h)| \geq M \sqrt{2^{2k} - 2^{2r}} \sqrt{k \log 2}\right\}.
        \end{equation}
        Let $\mathcal{H}_k \circeq 2^{-3k} \Z$ and note that $|\mathcal{H}_k \cap [0,1]| = 2^{3k} + 1$.
        By a union bound, the symmetry of $X_{r,k}'(h)$'s distribution, and Proposition \ref{prop:large.deviation.estimates.derivative}, we obtain
        \begin{equation}\label{eq:thm:prop:large.deviation.estimates.derivative.eq.end.1}
            \PP(E) \leq \sum_{h\in \mathcal{H}_k \cap [0,1]} 2 \cdot \prob{\max_{h':|h' - h| \leq 2^{-3k-1}} X_{r,k}'(h') \geq M \sqrt{2^{2k} - 2^{2r}} \sqrt{k \log 2}} \leq (2^{3k} + 1) \cdot c \, 2^{-2k M^2}.
        \end{equation}

        For every realisation $\omega$ of the field $\{X_{r,k}(h)\}_{h\in [0,1]}$, let $h^{\star}(\omega)$ be a point where the maximum is attained.
        When $\omega\in E^c$, the mean value theorem yields that, for any $h(\omega)\in \mathcal{S}_{r,k,L}$ such that $|h^{\star}(\omega) - h(\omega)| \leq 2 / |\mathcal{S}_{r,k,L}|$, we have
        \begin{equation}\label{eq:thm:prop:large.deviation.estimates.derivative.compensation}
            e^{X_{r,k}(h^{\star}(\omega))} - e^{X_{r,k}(h(\omega))} = X_{r,k}'(\xi(\omega)) e^{X_{r,k}(\xi(\omega))} (h^{\star}(\omega) - h(\omega)) \leq \frac{2M}{L} e^{X_{r,k}(h^{\star}(\omega))},
        \end{equation}
        for some $\xi(\omega)$ lying between $h(\omega)$ and $h^{\star}(\omega)$. By taking $L \circeq L(K) \circeq 2 M / (1 - e^{-K})$, we deduce $e^{X_{r,k}(h(\omega))} \geq e^{-K} e^{X_{r,k}(h^{\star}(\omega))}$.
        This reasoning shows that, on the event $E^c$,
        \begin{equation}\label{eq:thm:prop:large.deviation.estimates.derivative.eq.end.2}
            \max_{h\in \mathcal{S}_{r,k,L}} X_{r,k}(h) \geq \max_{h\in [0,1]} X_{r,k}(h) - K.
        \end{equation}
        The conclusion follows from \eqref{eq:thm:prop:large.deviation.estimates.derivative.eq.end.2} and \eqref{eq:thm:prop:large.deviation.estimates.derivative.eq.end.1} with $M = \frac{1}{2} (1 - e^{-K}) L$.
    \end{proof}

\section{Proof of Proposition \ref{prop:continuity.estimates.derivative} and Proposition \ref{prop:large.deviation.estimates.derivative}}\label{sec:proof.propositions}

    We start by controlling the tail probabilities for a single point of the field's derivative.

    \begin{lemma}\label{lem:derivative.upper.bound}
        Let $C > 0$.
        For any $-1 \leq r \leq k$, $0 \leq x \leq C (2^{2k} - 2^{2r})$ and $h\in \R$,
        \begin{equation}\label{eq:lem:derivative.upper.bound}
            \prob{X_{r,k}'(h) \geq x} \leq c \exp\left(-2\frac{x^2}{2^{2k} - 2^{2r}}\right),
        \end{equation}
        where the constant $c$ only depends on $C$.
    \end{lemma}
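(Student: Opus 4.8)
The plan is to use the Chernoff bound: for any $\lambda > 0$,
\[
    \prob{X_{r,k}'(h) \geq x} \leq e^{-\lambda x} \, \esp{e^{\lambda X_{r,k}'(h)}},
\]
and then optimize over $\lambda$. Since $X_{r,k}'(h) = \sum_{2^r < \log p \leq 2^k} \mathrm{Im}(U_p\, p^{-ih}) \log p \, / \, p^{1/2}$ is a sum of independent terms, the moment generating function factorizes:
\[
    \esp{e^{\lambda X_{r,k}'(h)}} = \prod_{2^r < \log p \leq 2^k} \esp{\exp\!\left(\lambda \, \frac{\mathrm{Im}(U_p\, p^{-ih}) \log p}{p^{1/2}}\right)}.
\]
Writing $U_p = e^{i\theta_p}$ with $\theta_p$ uniform on $[0,2\pi)$, each factor is $\esp{e^{\mu_p \sin(\theta_p + \phi_p)}}$ for $\mu_p = \lambda \log p \, / \, p^{1/2}$ and some phase $\phi_p$, which by rotation invariance equals $I_0(\mu_p)$, the modified Bessel function of the first kind. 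So the whole thing reduces to controlling $\prod_p I_0(\lambda \log p \, / \, p^{1/2})$.

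The key step is then a Gaussian-type bound on each Bessel factor. Using $I_0(\mu) \leq e^{\mu^2/4}$ (which follows from $\log I_0(\mu) \leq \mu^2/4$, e.g. by comparing Taylor coefficients $I_0(\mu) = \sum_k (\mu/2)^{2k}/(k!)^2$ with $e^{\mu^2/4} = \sum_k (\mu^2/4)^k/k!$ termwise, since $(k!)^2 \geq k!$), we get
\[
    \esp{e^{\lambda X_{r,k}'(h)}} \leq \exp\!\left(\frac{\lambda^2}{4} \sum_{2^r < \log p \leq 2^k} \frac{(\log p)^2}{p}\right).
\]
The remaining analytic input is the estimate $\sum_{2^r < \log p \leq 2^k} (\log p)^2 / p = 2^{2k} - 2^{2r} + O(\text{error})$, which comes from Mertens-type asymptotics: by partial summation against $\sum_{p \leq y} (\log p)/p = \log y + O(1)$, one finds $\sum_{p \leq y} (\log p)^2/p = \tfrac{1}{2}(\log y)^2 + O(\log y)$, and evaluating at $y = e^{2^k}$ and $y = e^{2^r}$ and subtracting gives $\tfrac{1}{2}(2^{2k} - 2^{2r}) + O(2^k)$; a factor of $2$ discrepancy with the variance normalization $2^{2k}-2^{2r}$ in the statement is absorbed because $\mathrm{Var}(\mathrm{Im}(U_p p^{-ih})) = 1/2$, so in fact one should track constants carefully here — the exponent $2x^2/(2^{2k}-2^{2r})$ corresponds to $\sum (\log p)^2/p \approx \tfrac{1}{2}(2^{2k}-2^{2r})$ feeding into $\lambda^2/4$ times that, optimized at $\lambda = 4x / \sum(\log p)^2/p$, yielding $\exp(-x^2 / \sum(\log p)^2/p) = \exp(-2x^2/(2^{2k}-2^{2r})(1+o(1)))$.

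Putting it together: with the Bessel bound the Chernoff estimate gives
\[
    \prob{X_{r,k}'(h) \geq x} \leq \exp\!\left(-\lambda x + \frac{\lambda^2}{4}\big(\tfrac{1}{2}(2^{2k}-2^{2r}) + O(2^k)\big)\right),
\]
and choosing $\lambda$ to minimize the right side produces the exponent $-2x^2/(2^{2k}-2^{2r})$ up to the error term. The role of the hypothesis $0 \leq x \leq C(2^{2k}-2^{2r})$ is exactly to control this error: it forces the optimal $\lambda = O(1)$ to be bounded, so that $\lambda^2 \cdot O(2^k)$ is of lower order than the main exponent $\lambda^2 \cdot 2^{2k}$ (and also keeps us in a regime where the crude Bessel bound $I_0(\mu) \leq e^{\mu^2/4}$ is not wasteful), with the resulting multiplicative slack folded into the constant $c = c(C)$. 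I expect the main obstacle to be bookkeeping the constants correctly — in particular reconciling the variance $\tfrac{1}{2}$, the Bessel bound constant $\tfrac14$, and the Mertens leading constant $\tfrac12$ to land precisely on $2x^2/(2^{2k}-2^{2r})$ rather than some other multiple, and verifying that the $O(2^k)$ Mertens error truly contributes only to $c(C)$ under the stated range of $x$.
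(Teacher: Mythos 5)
Your overall route is the same as the paper's (Chernoff bound, factorization over primes, identification of each factor with the modified Bessel function $I_0$, then a Mertens/PNT-type estimate for $\sum (\log p)^2/p$), and your global bound $I_0(\mu)\le e^{\mu^2/4}$ is a correct and even slightly cleaner substitute for the paper's local expansion of $\log I_0$ (it spares the separate treatment of the finitely many small primes). However, there is a genuine gap exactly at the point you flagged: the prime-sum error term. With only Mertens' theorem $\sum_{p\le y}(\log p)/p=\log y+O(1)$ and partial summation, you get $\sum_{2^r<\log p\le 2^k}(\log p)^2/p=\tfrac12(2^{2k}-2^{2r})+O(2^k)$, and an $O(2^k)$ error does \emph{not} get absorbed into a constant $c(C)$. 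After optimizing, the exponent you obtain is $-2x^2/(2^{2k}-2^{2r})+O(\lambda^2 2^k)$; in the extreme admissible range $x\asymp C(2^{2k}-2^{2r})$ one has $\lambda\asymp C$, so the slack is a factor $e^{\Theta(2^k)}$, which grows with $k$. ``Lower order than the main term'' is not the relevant criterion here: the lemma asserts a multiplicative constant depending only on $C$, uniformly in $r,k$, and this uniformity is exactly what is used later (e.g.\ to beat the entropy factor $2^{3k}$ in the union bound of the main theorem).

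The fix is to replace the Mertens-level input by the sharper statement $\sum_{2^r<\log p\le 2^k}(\log p)^2/p=\tfrac12(2^{2k}-2^{2r})+O(1)$, with an absolute (i.e.\ $k,r$-independent) error. This is the paper's technical Lemma \ref{lem:technical.lemma}, proved from the prime number theorem with the classical error term $R(x)=O(xe^{-c\sqrt{\log x}})$ and integration by parts; Mertens alone is not strong enough, since the $O(1)$ fluctuation in $\sum_{p\le y}(\log p)/p$ multiplied by $\log y=2^k$ in the partial summation is precisely the source of your $O(2^k)$. With that lemma in hand, your argument goes through verbatim. One minor slip: the optimizing value is $\lambda=2x/\Sigma$ with $\Sigma=\sum(\log p)^2/p$ (equivalently $\lambda\approx 4x/(2^{2k}-2^{2r})$), not $\lambda=4x/\Sigma$, which would make the exponent vanish; your stated conclusion $\exp(-x^2/\Sigma)$ corresponds to the former choice.
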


    \begin{proof}
        Using Chernoff's inequality, the independence of the $U_p$'s and translation invariance, we have that, for all $\lambda \geq 0$,
        \begin{equation}\label{eq:lem:derivative.upper.bound.beginning}
            \prob{X_{r,k}'(h) \geq x} \leq e^{-\lambda x} \, \EE\big[e^{\lambda X_{r,k}'(h)}\big] = e^{-\lambda x} \prod_{2^r < \log p \leq 2^k} \EE\big[e^{\lambda W_p'(0)}\big].
        \end{equation}
        Note that
        \begin{equation}\label{eq:lem:derivative.upper.bound.beginning.2}
            \EE\big[e^{\lambda W_p'(0)}\big] = \frac{1}{2\pi} \int_0^{2\pi} \exp\left(\frac{\lambda \log p}{p^{1/2}} \sin(\theta)\right) d\theta = I_0\left(\frac{\lambda \log p}{p^{1/2}}\right),
        \end{equation}
        \cite[9.6.16, p.376]{MR0167642}, where $I_0$ denotes the {\it modified Bessel function of the first kind}.
        The function $I_0$ has the following series representation : $I_0(u) = 1 + \frac{u^2}{4} + \frac{u^4}{64} + O(u^6), ~u\in \R$.
        In turn,
        \begin{equation}\label{eq:expansion.log.I.0}
            \log(I_0(u)) = \frac{u^2}{4} - \frac{u^4}{64} + O(u^6), \quad u\in (-1,1),
        \end{equation}
        because $\log(1 + y) = y - \frac{y^2}{2} + O(y^3)$ for $y\in (-1,1)$, and $|I_0(u) - 1| < 1$ for $u\in (-1,1)$.
        Choose $\lambda = 4x / (2^{2k} - 2^{2r})$. By applying \eqref{eq:expansion.log.I.0} in \eqref{eq:lem:derivative.upper.bound.beginning.2}, the right-hand side of \eqref{eq:lem:derivative.upper.bound.beginning} is bounded from above by
        \begin{equation}\label{eq:lem:derivative.upper.bound.follow.up}
            c \, e^{-\lambda x} \exp\left(\sum_{2^r < \log p \leq 2^k} \frac{\lambda^2 (\log p)^2}{4 p} + \widetilde{c} \sum_{2^r < \log p \leq 2^k} \frac{\lambda^6 (\log p)^6}{p^3}\right).
        \end{equation}
        For the finite number of primes $p$ for which we cannot apply \eqref{eq:expansion.log.I.0} in \eqref{eq:lem:derivative.upper.bound.beginning.2} (note that $\lambda \log p < p^{1/2}$ holds for $p$ large enough since $\lambda \leq 4C$ by the assumption on $x$), the correction terms needed for \eqref{eq:lem:derivative.upper.bound.follow.up} to hold are absorbed in the constant $c$ in front of the first exponential in \eqref{eq:lem:derivative.upper.bound.follow.up}.
        The second sum in the big exponential is bounded by a constant independent from $r$ and $k$ since $\lambda \leq 4 C$ and $\sum_p (\log p)^6 p^{-3} < \infty$.
        By applying Lemma \ref{lem:technical.lemma} with $m = 2$, $\log P = 2^r$ and $\log Q = 2^k$, the first sum in the big exponential is bounded by $2x^2 / (2^{2k} - 2^{2r})$ up to an additive constant that only depends on $C$. The conclusion of the lemma follows.
    \end{proof}

    In the next lemma, we complement Lemma \ref{lem:derivative.upper.bound} by proving a large deviation estimate for $X'_{r,k}(0)$ and the difference $X_{r,k}'(h_2) - X_{r,k}'(h_1)$ jointly, where $|h_2 - h_1| \leq 2^{-3k}$.

    \begin{lemma}\label{lem:multivariate.derivative.chernoff}
        Let $C > 0$. For any $-1 \leq r \leq k$, $0 \leq x \leq C (2^{2k} - 2^{2r})$, $0 \leq y \leq 2^{6k}$, and any distinct $h_1,h_2\in \R$ such that $-2^{-3k-1} \leq h_1,h_2 \leq 2^{-3k-1}$,
        \begin{equation}\label{eq:lem:multivariate.derivative.chernoff}
            \prob{X_{r,k}'(0) \geq x, X_{r,k}'(h_2) - X_{r,k}'(h_1) \geq y} \leq c \exp\left(-2 \frac{x^2}{2^{2k} - 2^{2r}} - \frac{\widetilde{c} \, y^{3/2}}{|h_2 - h_1|\, 2^{3k}}\right),
        \end{equation}
        where the constants $c$ and $\widetilde{c}$ only depend on $C$.
    \end{lemma}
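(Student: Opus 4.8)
The plan is to run a two-parameter Chernoff bound on the pair $\big(X_{r,k}'(0),\, X_{r,k}'(h_2)-X_{r,k}'(h_1)\big)$, following the scheme of the proof of Lemma~\ref{lem:derivative.upper.bound}. Writing $U_p = e^{i\Theta_p}$ with $\Theta_p$ uniform on $[0,2\pi)$, we have $W_p'(0) = \frac{\log p}{p^{1/2}}\sin\Theta_p$ and, by $\sin\alpha - \sin\beta = 2\cos\frac{\alpha+\beta}{2}\sin\frac{\alpha-\beta}{2}$,
\[
    Z_p \circeq W_p'(h_2) - W_p'(h_1) = \frac{2\log p}{p^{1/2}}\,s_p\,\cos(\Theta_p - c_p),\qquad s_p \circeq \sin\!\Big(\tfrac{(h_1-h_2)\log p}{2}\Big),\ \ c_p \circeq \tfrac{(h_1+h_2)\log p}{2}.
\]
For $\lambda,\mu \geq 0$, rewriting $\lambda\sin\Theta_p + 2\mu s_p\cos(\Theta_p - c_p)$ as $(\lambda + 2\mu s_p\sin c_p)\sin\Theta_p + (2\mu s_p\cos c_p)\cos\Theta_p = R_p\sin(\Theta_p + \phi_p)$ with $R_p^2 = \lambda^2 + 4\lambda\mu\, s_p\sin c_p + 4\mu^2 s_p^2$, and using that $\Theta_p + \phi_p$ is again uniform, gives $\EE[e^{\lambda W_p'(0) + \mu Z_p}] = I_0\big(\tfrac{\log p}{p^{1/2}}R_p\big)$ exactly as in \eqref{eq:lem:derivative.upper.bound.beginning.2}. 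By independence of the $U_p$, Chernoff's inequality, and the elementary bound $\log I_0(u) \leq u^2/4$ valid for all real $u$ (since $I_0(u) = \sum_{n\geq 0}\frac{(u^2/4)^n}{(n!)^2} \leq \sum_{n\geq 0}\frac{(u^2/4)^n}{n!} = e^{u^2/4}$),
\[
    \prob{X_{r,k}'(0) \geq x,\ X_{r,k}'(h_2) - X_{r,k}'(h_1) \geq y} \leq \exp\Big(-\lambda x - \mu y + \sum_{2^r < \log p \leq 2^k} \frac{(\log p)^2 R_p^2}{4p}\Big).
\]

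Next I would estimate the three sums produced by $R_p^2$. By Lemma~\ref{lem:technical.lemma}, $\sum\frac{(\log p)^2}{p} = \frac{2^{2k}-2^{2r}}{2} + O(1)$ and $\sum\frac{(\log p)^4}{p} \leq c\,2^{4k}$; also $2^{2k} \geq (\log 2)^2$ whenever the summation range contains a prime (otherwise $X_{r,k}' \equiv 0$ and the claim is trivial for $c \geq 1$). Using $|s_p| \leq \frac{|h_2-h_1|\log p}{2}$ and $|\sin c_p| \leq |c_p| \leq \frac{2^{-3k}\log p}{2}$ (as $|h_1|,|h_2| \leq 2^{-3k-1}$), the $\lambda^2$-term is at most $\frac{\lambda^2(2^{2k}-2^{2r})}{8} + O(1)$, the $\mu^2$-term at most $c\,\mu^2|h_2-h_1|^2 2^{4k}$, and the cross term at most $c\,\lambda\mu\,|h_2-h_1|\,2^{-3k}\,2^{4k} = c\,\lambda\mu\,|h_2-h_1|\,2^{k}$. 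Taking $\lambda = 4x/(2^{2k}-2^{2r})$ (so $\lambda \leq 4C$, since $x \leq C(2^{2k}-2^{2r})$) collapses $-\lambda x + \frac{\lambda^2(2^{2k}-2^{2r})}{8}$ to $-\frac{2x^2}{2^{2k}-2^{2r}}$, and an AM--GM estimate absorbs the cross term, $c\lambda\mu|h_2-h_1|2^k \leq \tfrac12\mu^2|h_2-h_1|^2 2^{4k} + O_C(1)$. All in all,
\[
    \prob{\,\cdots\,} \leq c\,\exp\Big(-\frac{2x^2}{2^{2k}-2^{2r}}\Big)\exp\big(-\mu y + A\mu^2\big),\qquad 0 \leq A \leq c\,|h_2-h_1|^2 2^{4k},
\]
with $c$ depending only on $C$, and choosing $\mu = y/(2A)$ gives $\exp(-\mu y + A\mu^2) = \exp(-y^2/(4A)) \leq \exp\big(-\widetilde{c}\,y^2/(|h_2-h_1|^2 2^{4k})\big)$.

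Finally I would convert this sub-Gaussian-type tail into the stated $y^{3/2}$-tail by splitting on the size of $y$. If $y \geq |h_2-h_1|^2 2^{2k}$, then $y^{1/2} \geq |h_2-h_1|\,2^k$, hence $\frac{y^2}{|h_2-h_1|^2 2^{4k}} = \frac{y^{1/2}}{|h_2-h_1|\,2^k}\cdot\frac{y^{3/2}}{|h_2-h_1|\,2^{3k}} \geq \frac{y^{3/2}}{|h_2-h_1|\,2^{3k}}$, which yields \eqref{eq:lem:multivariate.derivative.chernoff}. If instead $y < |h_2-h_1|^2 2^{2k}$, then $\frac{y^{3/2}}{|h_2-h_1|\,2^{3k}} < |h_2-h_1|^2 \leq 2^{-6k} \leq (\log 2)^{-6}$ is bounded by a universal constant, so \eqref{eq:lem:multivariate.derivative.chernoff} holds (after enlarging $c$ once more) just by dropping the second event and invoking Lemma~\ref{lem:derivative.upper.bound}; the hypothesis $y \leq 2^{6k}$ is not needed. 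I expect the main obstacle to be purely organizational: the amplitude bookkeeping for $R_p^2$ and the cross-term absorption, together with keeping straight which constants are universal versus $C$-dependent. The one genuine idea is the observation that the natural estimate has a $y^2$-rate, which can only be degraded to the $y^{3/2}$-rate of the statement once $y$ is large enough, and that for smaller $y$ the whole second term is $O(1)$ so Lemma~\ref{lem:derivative.upper.bound} already suffices.
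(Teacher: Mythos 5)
Your proof is correct, and it is the same overall strategy as the paper's (a two-parameter Chernoff bound, with the joint exponential moment computed exactly as a modified Bessel function $I_0$ of an explicit amplitude), but the execution differs at two genuine points. The paper bounds $\log I_0$ via the local expansion \eqref{eq:lem:multivariate.derivative.chernoff.beg.asymptotics}, valid only for small arguments; this forces the restriction $\lambda_2 \leq |h_2-h_1|^{-1}$ (which is where the hypothesis $y \leq 2^{6k}$ enters), requires a correction for the finitely many small primes, and the stated $y^{3/2}$ exponent is produced directly by the fixed, non-optimized choice $\lambda_2 = y^{1/2}|h_2-h_1|^{-1}2^{-3k}$, after first reducing to $y \geq \widetilde{C}|h_2-h_1|2^{3k}$. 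You instead use the global bound $\log I_0(u) \leq u^2/4$ (valid for all $u$, by comparing the series for $I_0$ with $e^{u^2/4}$), compute the amplitude exactly via the sum-to-product identity, absorb the cross term by AM--GM, and then optimize $\mu$, which yields the stronger sub-Gaussian rate $y^2/(|h_2-h_1|^2 2^{4k})$; you recover the stated $y^{3/2}/(|h_2-h_1|2^{3k})$ rate when $y \geq |h_2-h_1|^2 2^{2k}$ and dispose of the complementary range by Lemma~\ref{lem:derivative.upper.bound}, exactly parallel to the paper's own initial reduction. What each route buys: yours removes the constraint on $\lambda_2$ and hence the hypothesis $y \leq 2^{6k}$ (as you observe), avoids the small-prime corrections, and proves a slightly stronger intermediate estimate; the paper's choice of $\lambda_2$ lands on the stated exponent immediately, without your final case split. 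The estimates you rely on (the bound on $I_0$, the bounds $|s_p| \leq \tfrac{1}{2}|h_2-h_1|\log p$ and $|\sin c_p| \leq \tfrac{1}{2}2^{-3k}\log p$, and the prime sums via Lemma~\ref{lem:technical.lemma}) all check out, so the argument is complete.
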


    \begin{proof}
        Assume that $y \geq \widetilde{C} |h_2 - h_1| 2^{3k}$ for a large constant $\widetilde{C} \geq 1$ because otherwise \eqref{eq:lem:multivariate.derivative.chernoff} follows from \eqref{eq:lem:derivative.upper.bound}.
        Since $|h_2 - h_1| 2^{3k} \leq 1$, note that this assumption also implies $y^{1/2} \geq \widetilde{C}^{1/2} |h_2 - h_1| 2^{3k}$.
        For all $\lambda_1,\lambda_2 \geq 0$, the left-hand side of \eqref{eq:lem:multivariate.derivative.chernoff} is bounded from above (using Chernoff's inequality) by
        \begin{equation}\label{eq:lem:multivariate.derivative.chernoff.beg.1}
            \esp{\exp(\lambda_1 X_{r,k}'(0) + \lambda_2 (X_{r,k}'(h_2) - X_{r,k}'(h_1)))} \exp(-\lambda_1 x - \lambda_2 y).
        \end{equation}
        We will show that if $0 \leq \lambda_1 \leq 4C$ and $0 \leq \lambda_2 \leq |h_2 - h_1|^{-1}$, then
        \begin{equation}\label{eq:lem:multivariate.derivative.chernoff.beg.2}
            \begin{aligned}
                &\esp{\exp(\lambda_1 X_{r,k}'(0) + \lambda_2 (X_{r,k}'(h_2) - X_{r,k}'(h_1)))} \\
                &\quad\leq c\, \exp\left(\frac{\lambda_1^2}{8} (2^{2k} - 2^{2r}) + c\, \lambda_2 |h_2 - h_1| \, 2^{3k} + c^2 \lambda_2^2 |h_2 - h_1|^2 \, 2^{4k}\right).
            \end{aligned}
        \end{equation}
        The result \eqref{eq:lem:multivariate.derivative.chernoff} follows by choosing $\lambda_1 = 4x / (2^{2k} - 2^{2r})$, $\lambda_2 = y^{1/2} |h_2 - h_1|^{-1}\, 2^{-3k}$ and $\widetilde{C}$ large enough (with respect to $c$) in \eqref{eq:lem:multivariate.derivative.chernoff.beg.1} and \eqref{eq:lem:multivariate.derivative.chernoff.beg.2}.
        The assumptions on $x$, $y$, $h_1$ and $h_2$ ensure that $0 \leq \lambda_1 \leq 4C$ and $0 \leq \lambda_2 \leq |h_2 - h_1|^{-1}$.
        We now prove \eqref{eq:lem:multivariate.derivative.chernoff.beg.2}. For $2^r < \log p \leq 2^k$, the quantity
        \begin{equation}\label{eq:lem:multivariate.derivative.chernoff.beg.w}
            \esp{\exp(\lambda_1 W_p'(0) + \lambda_2(W_p'(h_2) - W_p'(h_1)))}
        \end{equation}
        (recall $W_p'(h)$ from \eqref{def:X.prime.r.k}) can be written as
        \begin{equation}
            \frac{1}{2\pi}\int_0^{2\pi} \hspace{-2mm}\exp\left(\frac{\log p}{p^{1/2}} \Big\{\lambda_1\sin\theta + \lambda_2(\sin(\theta - h_2 \log p) - \sin(\theta - h_1\log p))\Big\}\right) d\theta.
        \end{equation}
        Since $\sin(\theta - \eta) = \sin(\theta)\cos(\eta)- \cos(\theta)\sin(\eta)$ and
        \begin{equation}
            \frac{1}{2\pi}\int_0^{2\pi} \hspace{-1mm}\exp(a \cos\theta + b \sin\theta) d\theta = I_0(\sqrt{a^2 + b^2}),
        \end{equation}
        \cite[9.6.16, p.376]{MR0167642}, then \eqref{eq:lem:multivariate.derivative.chernoff.beg.w} is equal to
        \begin{equation}\label{eq:lem:multivariate.derivative.chernoff.beg.I.0}
            I_0\left(\sqrt{\frac{(\log p)^2}{p}
                \left\{\hspace{-1mm}
                \begin{array}{l}
                    \big(\lambda_1 + \lambda_2(\cos(h_2 \log p) - \cos(h_1 \log p))\big)^2 \\
                    + \big(\lambda_2(\sin(h_1 \log p) - \sin(h_2 \log p))\big)^2
                \end{array}
                \hspace{-1mm}\right\}}\right).
        \end{equation}
        From \eqref{eq:expansion.log.I.0}, note that
        \begin{equation}\label{eq:lem:multivariate.derivative.chernoff.beg.asymptotics}
            \log(I_0(\sqrt{u})) = \frac{u}{4} - \frac{u^2}{64} + O(u^3), \quad u\in (-1,1).
        \end{equation}
        Also, note that
        \begin{equation}\label{eq:lem:multivariate.derivative.chernoff.sin.asymptotics}
            \begin{aligned}
                &\sin(h_1 \log p) - \sin(h_2 \log p) = O(|h_2 - h_1| \log p), \\
                &\cos(h_2 \log p) - \cos(h_1 \log p) = O(|h_2 - h_1| \log p).
            \end{aligned}\vspace{0.7mm}
        \end{equation}
        If we put \eqref{eq:lem:multivariate.derivative.chernoff.beg.w}, \eqref{eq:lem:multivariate.derivative.chernoff.beg.I.0}, \eqref{eq:lem:multivariate.derivative.chernoff.beg.asymptotics} and \eqref{eq:lem:multivariate.derivative.chernoff.sin.asymptotics} together, we get, for $p$ large enough,
        \begin{align}\label{eq:lem:multivariate.derivative.chernoff.eq.end}
            \log \eqref{eq:lem:multivariate.derivative.chernoff.beg.w}
            &\leq \frac{(\log p)^2}{4p} \left\{\big(\lambda_1 + c\, \lambda_2 |h_2 - h_1| \log p\big)^2 + \big(c\, \lambda_2 |h_2 - h_1| \log p)\big)^2\right\} + \frac{\widetilde{c}}{p^2} \notag \\
            &\leq \frac{\lambda_1^2}{4} \frac{(\log p)^2}{p} + c\, \lambda_2 |h_2 - h_1| \frac{(\log p)^3}{p} + c^2 \lambda_2^2 |h_2 - h_1|^2 \frac{(\log p)^4}{p} + \frac{\widetilde{c}}{p^2}.
        \end{align}
        To obtain the last inequality, we used the fact that $\lambda_1 \leq 4C$.
        After summing \eqref{eq:lem:multivariate.derivative.chernoff.eq.end} over $2^r < \log p \leq 2^k$ and using Lemma \ref{lem:technical.lemma}, we deduce
        \begin{equation}
            \begin{aligned}
                &\log \esp{\exp(\lambda_1 X_{r,k}'(0) + \lambda_2(X_{r,k}'(h_2) - X_{r,k}'(h_1)))} \\[2mm]
                &\hspace{10mm}\leq \widetilde{c} + \frac{\lambda_1^2}{8} (2^{2k} - 2^{2r}) + c\, \lambda_2 |h_2 - h_1| \, 2^{3k} + c^2 \lambda_2^2 |h_2 - h_1|^2 \, 2^{4k},
            \end{aligned}
        \end{equation}
        where the constants $c$ and $\widetilde{c}$ only depend on $C$.
        This is exactly \eqref{eq:lem:multivariate.derivative.chernoff.beg.2}.
    \end{proof}

    We are now ready to prove Proposition \ref{prop:continuity.estimates.derivative}.
    For $k\in \N_0$, recall that $\mathcal{H}_k \circeq 2^{-3k} \Z$, so that $\mathcal{H}_0 \subseteq \mathcal{H}_1 \subseteq \ldots \subseteq \mathcal{H}_k \subseteq \ldots \subseteq \R$ is a nested sequence of sets of equidistant points and $|\mathcal{H}_k \cap [0,1)| = 2^{3k}$.

    \begin{proof}[Proof of Proposition \ref{prop:continuity.estimates.derivative}]
        Without loss of generality, we may assume $h = 0$.
        We can also round $x$ up to the nearest larger integer and decrease $a$ so that we may assume that $x\in \N_0$ and $a \geq 1$.
        To see why this is possible, define the new values of $x$ and $a$ by $\widetilde{x} \circeq \lceil x \rceil$ and $\widetilde{a} \circeq a - \widetilde{x} + x$, respectively. Since $x + a = \widetilde{x} + \widetilde{a}$ and $x \leq \widetilde{x}$, and assuming that we can show \eqref{eq:prop:continuity.estimates.derivative} with $\widetilde{x}$ and $\widetilde{a}$, we would have
        \begin{equation}
            \begin{aligned}
            \prob{\max_{h':|h' - h| \leq 2^{-3k-1}} X_{r,k}'(h') \geq x + a, X_{r,k}'(h) \leq  x}
            &\leq \prob{\max_{h':|h' - h| \leq 2^{-3k-1}} X_{r,k}'(h') \geq \widetilde{x} + \widetilde{a}, X_{r,k}'(h) \leq  \widetilde{x}} \\
            &\leq c ~ \exp\left(-2\frac{\widetilde{x}^2}{2^{2k} - 2^{2r}} - \widetilde{c} ~ \widetilde{a}^{3/2}\right) \\
            &\leq c' \exp\left(-2\frac{x^2}{2^{2k} - 2^{2r}} - c'' a^{3/2}\right),
            \end{aligned}
        \end{equation}
        where the constants $c'$ and $c''$ only depend on $C$.

        It remains to show \eqref{eq:prop:continuity.estimates.derivative} when $x\in \N_0$ and $a \geq 1$.
        We choose to adapt the chaining argument found in \cite[Proposition 2.5]{MR3619786}.
        Define the events
        \begin{equation}
            B_x \circeq \{X_{r,k}'(0) \leq 0\} \quad \text{and} \quad B_q \circeq \{X_{r,k}'(0)\in [x - q - 1,x - q]\}, \quad q\in \{0,1,\ldots,x-1\}.
        \end{equation}
        Note that the left-hand side of \eqref{eq:prop:continuity.estimates.derivative} is at most
        \begin{equation}\label{eq:prop:continuity.estimates.derivative.eq.beginning}
            \sum_{q=0}^x \PP\left(B_q \cap \Big\{\max_{h'\in A} \{X_{r,k}'(h') - X_{r,k}'(0)\} \geq a + q\Big\}\right),
        \end{equation}
        where $A = [-2^{-3k-1},2^{-3k-1}]$.
        Let $(h_i, i\in \N_0)$ be a sequence such that $h_0 = 0$, $h_i\in \mathcal{H}_{k+i} \cap A$, $\lim_{i\to\infty} h_i = h'$ and $|h_{i+1} - h_i|\in\{0,\frac{1}{8} 2^{-3(k+i)}, \frac{2}{8} 2^{-3(k+i)}, \frac{3}{8} 2^{-3(k+i)}, \frac{4}{8} 2^{-3(k+i)}\}$ for all $i$.
        Because the map $h\mapsto X_{r,k}'(h)$ is almost-surely continuous,
        \begin{equation}
            X_{r,k}'(h') - X_{r,k}'(0) = \sum_{i=0}^{\infty} (X_{r,k}'(h_{i+1}) - X_{r,k}'(h_i)).
        \end{equation}
        Since $\sum_{i=0}^{\infty} \frac{1}{2 (i+1)^2} \leq 1$, we have the inclusion of events,
        \begin{equation}
            \left\{X_{r,k}'(h') - X_{r,k}'(0) \geq a + q\right\}
            \subseteq \bigcup_{i=0}^{\infty} \left\{X_{r,k}'(h_{i+1}) - X_{r,k}'(h_i) \geq \frac{a + q}{2 (i+1)^2}\right\}.
        \end{equation}
        This implies that $\{\max_{h'\in A} X_{r,k}'(h') - X_{r,k}'(0) \geq a + q\}$ is included in
        \begin{equation}
            \bigcup_{i=0}^{\infty} \bigcup_{\substack{h_1\in \mathcal{H}_{k+i} \cap A \\ |h_2 - h_1| = \frac{j}{8} 2^{-3(k + i)}  \\ \text{for some } j\in \{1,2,3,4\}}} \left\{X_{r,k}'(h_2) - X_{r,k}'(h_1) \geq \frac{a + q}{2 (i+1)^2}\right\},
        \end{equation}
        where we have ignored the case $h_1 = h_2$ since the event $\{X_{r,k}'(h_2) - X_{r,k}'(h_1) \geq \frac{a + q}{2 (i+1)^2}\}$ is the empty set.
        Because $|\mathcal{H}_{k+i} \cap A| \leq c \, 2^{3i}$, the $q$-th summand in \eqref{eq:prop:continuity.estimates.derivative.eq.beginning} is at most,
        \begin{equation}\label{eq:prop:continuity.estimates.derivative.eq.middle}
            \sum_{i=0}^{\infty} c \, 2^{3i} \sup_{\substack{h_1\in \mathcal{H}_{k+i}\cap A \\ |h_2 - h_1| = \frac{j}{8} 2^{-3(k + i)}  \\ \text{for some } j\in \{1,2,3,4\}}} \PP\left(B_q \cap \left\{X_{r,k}'(h_2) - X_{r,k}'(h_1) \geq \frac{a + q}{2 (i+1)^2}\right\}\right).
        \end{equation}
        Note that $a + q \leq a + x \leq 2^{6k}$ by assumption.
        Lemma \ref{lem:multivariate.derivative.chernoff} can thus be applied to get that \eqref{eq:prop:continuity.estimates.derivative.eq.middle} is at most
        \begin{equation}
            c \sum_{i=0}^{\infty} 2^{3i} \exp\left(-2\frac{(x - q - 1)^2}{2^{2k} - 2^{2r}} - \widetilde{c} ~ 2^{3i} \frac{(a + q)^{3/2}}{(i+1)^3}\right) \leq c' e^{-2 \frac{(x - q - 1)^2}{2^{2k} - 2^{2r}} - \widetilde{c} (a + q)^{3/2}}.
        \end{equation}
        Since $e^{-\widetilde{c} (a+q)^{3/2}} \leq e^{-\widetilde{c} a^{3/2} - \widetilde{c} q^{3/2}}$, \eqref{eq:prop:continuity.estimates.derivative.eq.beginning} is at most
        \begin{equation}\label{eq:prop:continuity.estimates.derivative.eq.end}
            \begin{aligned}
            c' \, e^{-\widetilde{c} a^{3/2}} \sum_{q=0}^x e^{-2 \frac{(x - q - 1)^2}{2^{2k} - 2^{2r}} - \widetilde{c} q^{3/2}}
            &\leq c' \, e^{-\frac{2 x^2}{2^{2k} - 2^{2r}} - \widetilde{c} a^{3/2}} \sum_{q=0}^x e^{4C (q + 1) - \widetilde{c} q^{3/2}} \\
            &\leq c'' e^{-\frac{2 x^2}{2^{2k} - 2^{2r}} - \widetilde{c} a^{3/2}},
            \end{aligned}
        \end{equation}
        where we used the assumption $x \leq C (2^{2k} - 2^{2r})$ to obtain the first inequality in \eqref{eq:prop:continuity.estimates.derivative.eq.end}. This proves \eqref{eq:prop:continuity.estimates.derivative}.
    \end{proof}

    \begin{proof}[Proof of Proposition \ref{prop:large.deviation.estimates.derivative}]
        The left-hand side of \eqref{eq:prop:large.deviation.estimates.derivative} is at most
        \begin{equation}
            \prob{X_{r,k}'(h) \geq x - 2} + \PP\left(\hspace{-1mm}
            \begin{array}{l}
                \max_{h':|h' - h| \leq 2^{-3k-1}} X_{r,k}'(h') \geq (x-2) + 2, \\[1mm]
                X_{r,k}'(h) \leq  x - 2
            \end{array}
            \hspace{-1mm}\right)
        \end{equation}
        The conclusion follows from Lemma \ref{lem:derivative.upper.bound} and Proposition \ref{prop:continuity.estimates.derivative} with $x-2$ in place of $x$ and $a=2$.
    \end{proof}

\appendix
\section{Technical lemma}

    \begin{lemma}\label{lem:technical.lemma}
        Let $m \geq 1$ and $1 \leq P < Q$, then
        \begin{equation}
            \bigg|\sum_{P < p \leq Q} \frac{(\log p)^m}{p} - \left(\frac{(\log Q)^m}{m} - \frac{(\log P)^m}{m}\right)\bigg| \leq D,
        \end{equation}
        where $D > 0$ is a constant that only depends on $m$.
    \end{lemma}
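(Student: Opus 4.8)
The plan is to reduce the statement to the case $P = 1$ and then to show that the function $g(Q) \circeq \sum_{p \le Q} \frac{(\log p)^m}{p} - \frac{(\log Q)^m}{m}$ is bounded on $[1, \infty)$ by a constant depending only on $m$. The reduction suffices because
\begin{equation*}
    \sum_{P < p \le Q} \frac{(\log p)^m}{p} - \left(\frac{(\log Q)^m}{m} - \frac{(\log P)^m}{m}\right) = g(Q) - g(P),
\end{equation*}
so the lemma then holds with $D = 2 \sup_{Q \ge 1} |g(Q)|$. To estimate $g$, I would write the sum as a Riemann--Stieltjes integral against the Chebyshev function $\theta(t) \circeq \sum_{p \le t} \log p$ and insert the prime number theorem in the form $\theta(t) = t + R(t)$, where $R(t) = -t$ for $1 \le t < 2$ and $|R(t)| \le c\, t\, e^{-c'\sqrt{\log t}}$ for $t \ge 2$ (with $c, c'$ absolute). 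Writing $w(t) \circeq (\log t)^{m-1}/t$ and noting that $\int_1^Q \frac{(\log t)^{m-1}}{t}\, dt = \frac{(\log Q)^m}{m}$ exactly, one obtains (using $d\theta = dt + dR$)
\begin{equation*}
    g(Q) = \int_1^Q w(t)\, d\theta(t) - \int_1^Q \frac{(\log t)^{m-1}}{t}\, dt = \int_1^Q w(t)\, dR(t).
\end{equation*}

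It remains to bound this last integral uniformly in $Q \ge 1$. Integration by parts (valid since $w$ is continuously differentiable and $R$ has bounded variation on $[1, Q]$) gives $\int_1^Q w\, dR = w(Q) R(Q) - w(1) R(1) - \int_1^Q R(t) w'(t)\, dt$, where $w'(t) = \frac{(\log t)^{m-2}\big((m-1) - \log t\big)}{t^2}$. The term $w(1) R(1)$ is an $m$-dependent constant (it vanishes unless $m = 1$). For the boundary term at $Q$, $|w(Q) R(Q)| \le c\, (\log Q)^{m-1} e^{-c'\sqrt{\log Q}}$, and $y \mapsto y^{m-1} e^{-c'\sqrt{y}}$ is bounded on $[0,\infty)$ by a constant depending only on $m$. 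For the integral term, $\int_1^Q |R(t) w'(t)|\, dt \le \int_1^\infty |R(t) w'(t)|\, dt$; on $[2,\infty)$ one uses $|(m-1) - \log t| \le (m-1) + \log t$ to get $|R(t) w'(t)| \le \frac{c\, e^{-c'\sqrt{\log t}}\big((m-1)(\log t)^{m-2} + (\log t)^{m-1}\big)}{t}$, so the substitution $u = \log t$ yields
\begin{equation*}
    \int_2^\infty |R(t) w'(t)|\, dt \le c \int_{\log 2}^\infty e^{-c'\sqrt{u}} \big((m-1) u^{m-2} + u^{m-1}\big)\, du < \infty,
\end{equation*}
the integral converging because its integrand has no singularity on $[\log 2, \infty)$ and decays faster than any power of $u$; and $\int_1^2 |R(t) w'(t)|\, dt \le \int_1^2 t\, |w'(t)|\, dt$ is a finite constant depending only on $m$, the integrand being locally integrable on $[1,2]$ for every $m \ge 1$. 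Summing the three contributions bounds $\sup_{Q \ge 1}|g(Q)|$, hence $D$, by a constant depending only on $m$.

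The step I expect to be the crux is the choice of input: the elementary Mertens bound $\sum_{p \le x} \frac{\log p}{p} = \log x + O(1)$ (equivalently $\theta(t) - t = O(t)$) is \emph{not} sufficient here. Carried through the partial summation above, such an error would leave $g(Q)$ of size $O\big((\log Q)^{m-1}\big)$, since each additional power of $\log p$ in the summand consumes one order of cancellation. This is precisely why the prime number theorem with a de la Vall\'ee Poussin error term is invoked: its superpolynomial-in-$\log t$ decay survives multiplication by $(\log t)^{m-1}$ for every fixed $m$, which is exactly what keeps the final constant dependent on $m$ alone. An equivalent route runs the same partial summation against $A(x) \circeq \sum_{p \le x} \frac{\log p}{p} = \log x + c_1 + r(x)$ with $\int_1^\infty (\log t)^m\, |r(t)|\, t^{-1}\, dt < \infty$; the constants $c_1$ then cancel between the two endpoints and one is left with the same main term and an $O(1)$ remainder depending only on $m$.
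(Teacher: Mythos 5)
Your proof is correct and follows essentially the same route as the paper: the prime number theorem with the classical $O(x e^{-c\sqrt{\log x}})$ error term, a Riemann--Stieltjes representation of the sum, and integration by parts, with the error integral bounded by the superpolynomial decay in $\log t$. The only cosmetic difference is that you integrate the weight $(\log t)^{m-1}/t$ against Chebyshev's $\theta(t)=t+R(t)$ (after reducing to $P=1$), whereas the paper integrates $(\log u)^m/u$ against $\pi(x)=\int_2^x \frac{du}{\log u}+R(x)$ with $P\geq 2$; both yield the same main term $\frac{(\log Q)^m}{m}-\frac{(\log P)^m}{m}$ and an $O_m(1)$ remainder.
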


    \iffalse
    \begin{proof}
        See Remark 1 and the proof of Lemma 2.1 in \cite{MR3619786}.
    \end{proof}
    \fi

    \begin{proof}
        Without loss of generality, assume that $P \geq 2$.
        We use a standard form of the prime number theorem \cite[Theorem 6.9]{MR2378655} which states that
        \begin{equation}\label{eq:prime.number.theorem}
            \#\{p ~\text{prime} : p \leq x\} = \int_2^x \frac{1}{\log u} du + R(x),
        \end{equation}
        where $R(x) = O(x e^{-c \sqrt{\log x}})$, uniformly for $x \geq 2$.
        Using \eqref{eq:prime.number.theorem} and integration by parts, we have
        \begin{equation}\label{eq:lem:Y.j.prime.covariance.estimates.variance.calculations}
            \begin{aligned}
            \sum_{P < p \leq Q} \frac{(\log p)^m}{p}
            &= \int_P^Q \frac{(\log u)^{m-1}}{u} du + \int_P^Q \frac{(\log u)^m}{u} d R(u) \\
            &= \frac{(\log Q)^m}{m} - \frac{(\log P)^m}{m} + \frac{(\log Q)^m}{Q} R(Q) - \frac{(\log P)^m}{P} R(P) \\
            &\quad - \int_P^Q \frac{(m - \log u) (\log u)^{m-1}}{u^2} R(u) du.
            \end{aligned}
        \end{equation}
        By making the change of variable $z = c \sqrt{\log u}$ on the right-hand side of \eqref{eq:lem:Y.j.prime.covariance.estimates.variance.calculations}, note that
        \begin{align}
            \bigg|\int_P^Q \frac{(m - \log u) (\log u)^{m-1}}{u^2} R(u) du\bigg| \leq \widetilde{D} \int_0^{\infty} z^{2m + 1} e^{-z} dz = \widetilde{D} \, \Gamma(2m + 2),
        \end{align}
        where $\widetilde{D} > 0$ is a constant that only depends on $m$. This ends the proof.
    \end{proof}

\section*{Acknowledgements}

We would like to thank the anonymous referee for his valuable comments that led to improvements in the presentation of this paper.

%
% ----------  B I B L I O G R A P H Y  ----------
%

%\section*{References}

\bibliographystyle{authordate1}
\bibliography{Arguin_Ouimet_2018_new_derivatives_random_zeta_bib}

\end{document}